\newcounter{todo}
\theoremstyle{plain}
\newtheorem{theorem}{Theorem}
\newtheorem{lemma}[theorem]{Lemma}
\newtheorem{corollary}[theorem]{Corollary}
\theoremstyle{definition}
\newtheorem{definition}[theorem]{Definition}
\newtheorem{assumption}[theorem]{Assumption}
\theoremstyle{remark}
\newtheorem{remark}[theorem]{Remark}
\newcounter{locallabel}[theorem]
\newcommand*\E{\mathbb{E}}
\newcommand*\N{\mathbb{N}}
\renewcommand*\P{\mathbb{P}}
\newcommand*\R{\mathbb{R}}  % The real numbers.
\newcommand*\Z{\mathbb{Z}}
\newcommand*\FF{\mathcal{F}}
\newcommand*\PP{\mathcal{P}}
\newcommand*\ep{\varepsilon}
\newcommand*\La{\Lambda}
\newcommand*\om{\omega}
\newcommand*\dR{{\partial R}}
\newcommand*\dRn{{\partial R_n}}
\newcommand{\wt}{\widetilde}
\newcommand*\tlh{\wt h}
\newcommand*\tlp{\wt p}
\newcommand*\defn[1]{\textit{#1}}
\newcommand*\restr[1]{\vert_{#1}}
\newcommand*\dP{\partial}
\DeclareMathOperator{\diam}{diam}
\DeclarePairedDelimiter\abs{\lvert}{\rvert}
\let\oldtexttt\texttt
\renewcommand*\texttt[1]{\textnormal{\oldtexttt{#1}}}
\newcommand*\MRh{M(R; h_{R'})}
\newcommand*\MRth{M(R; \tlh_{R'})}
\newcommand*\MRdR{M(R; h_{\dP R})}
\newcommand*\MRn{M(R; h_{\dP R_n})}
\newcommand*\muMRh{\mu_{\MRh}}
\newcommand*\muMRth{\mu_{\MRth}}
\newcommand*\muMRdR{\mu_{\MRdR}}
\newcommand*\muMRn{\mu_{\MRn}}
\begin{document}

\title[The concentration inequality for a perturbed model]{%
The concentration inequality for a discrete height function model
perturbed by random potential}

\author{Andrew Krieger}
\address{Department of Mathematics, University of California, Los Angeles}
\email{akrieger@math.ucla.edu}

\subjclass[2010]{82B41}
\keywords{Concentration inequality, random surfaces.}

\date{\today}

\begin{abstract}
We present a proof of the concentration inequality
for a discrete random surface model,
where the underlying potential is perturbed by an additive random potential.
The proof is based on annealing the random potential,
and follows the method of~\cite{CEP96} and other works.
Our result demonstrates the robustness of this method.
\end{abstract}

\maketitle

\section{Introduction} \label{s_intro}

Random surface models are common objects of study in statistical physics
and combinatorics (e.g.~\cite{Geo88,She05}).
Random surface models describe membrane surfaces or hypersurfaces
whose formation and shape are governed on the microscopic scale
by a random interaction or process.
One is interested in predicting the macroscopic behavior
using data about the microscopic random process.
There is a wide literature studying many different random surface models.
A incomplete list includes
domino tilings and dimer models (e.g.~\cite{Kas63,CEP96,CKP01}),
polymer models (e.g.~\cite{BiPr18,BeYa19}),
lozenge tilings (e.g.~\cite{Des98,LRS01,Wil04}),
Ginzburg-Landau models (e.g.~\cite{DeGiIo00,FuOs04}),
the Ising model (e.g.~\cite{DKS92,Cer06}),
asymmetric exclusion processes (e.g.~\cite{FS06}),
sandpile models (e.g.\cite{LP08}),
the six vertex model (e.g.~\cite{BCG2016,CoSp16,ReSr16}),
and Young tableaux (e.g.~\cite{LS77,VK77,PR07}).
Most relevant to the current work are
discrete Lipschitz functions (e.g.~\cite{PSY13,Pel17,LT20})
and graph homomorphisms from certain discrete graphs to $\Z$
(e.g.~\cite{BHM00,Kah01,Gal03,CPST20}).

The specific model studied here
is built upon graph homomorphisms
from subgraphs of the nearest-neighbor lattice $\Z^m$ ($m \ge 1$)
into the graph $\Z$, again with nearest-neighbor edge structure;
in the sequel these are called ``height functions.''
In dimension $m=2$ these $\Z$-valued graph homomorphisms
correspond to height functions of the six-vertex model,
under the convention used in~\cite{Sri16}.
The model is extended by assigning random weights to all such graph
homomorphisms depending on the height values they take
(see the definition of the perturbed measures~$\mu(\cdot, \omega)$
in Definition~\ref{d_meas} below).
Homogenization of this model was previously studied by the current author
in conjunction with coauthors in~\cite{KMT21+},
and the same group of authors studied the underlying model
(i.e.\ $\Z$-valued graph homomorphisms from subsets of $\Z^m$,
without random perturbation) in~\cite{KMT20}.

The main result of this note is the concentration inequality,
presented as Theorem~\ref{thm_conc} below.
Our result implies that the probability mass of a
perturbed random distribution~$\mu(\cdot, \omega)$
concentrates on functions that are ``close,''
relative to the size of the domain,
to the $\mu(\cdot, \omega)$-expected height at each point in the domain.
To properly ground this result in the surrounding literature,
let us consider analogues of the concentration inequality in other settings.
Generally, concentration inequalities control
the magnitude and likelihood of fluctuations of a random variable,
especially a random statistic that quantifies some large-scale property
of an underlying random system.
The concentration inequality is a bound on the probability that the random
variable differs from its expected value (or sometimes, another typical value)
by more than $t > 0$ (in some metric).
The probability bound vanishes as $t \to \infty$
or as the size of the underlying random system goes to infinity.
The simplest example is the weak law of large numbers
(cf.\ \cite[Theorem~2.2.3]{Durrett}):
Let $X_1,\dotsc,X_n$ be uncorrelated random variables
(meaning that $E(X_iX_j) = E(X_i) E(X_j)$ for $i \ne j$),
all with common mean $\mu \in \R$ and
common variance bound $E[(X_i-E(X_i))^2] \le \sigma^2$ for some $\sigma^2 > 0$.
Define $M_n = (X_1 + \dotsb + X_n) / n$.
Then $E(M_n) = \mu$ and for any $t > 0$,
\[
	P \biggl( | M_n - \mu | > \frac{t}{\sqrt n} \biggr)
	\le \frac{\sigma^2}{t^2} .
\]

Focusing on random surface models, concentration inequalities are closely
related to the limit shape phenomenon.
A limit shape occurs when samples of the random
microscopic model converge with high probability to a deterministic shape, i.e.\
the limit shape, under a suitable scaling limit.
The concentration inequality provides a route towards proving
the existence of a limit shape.
Indeed, the concentration inequality implies that when the system size is large,
a sample of the model is close to the expected value for that system size,
with high probability; see for example Figure~\ref{fig}.
Then it remains to show that these expected values converge
as the system size tends to infinity,
and the limit is necessarily the limit shape for the model.
This convergence can be established by means of a variational principle,
which characterizes the limit shape via a variational problem.
See e.g.~\cite[Theorem~2.12]{MT20}, which is the variational principle for a
similar model, and whose proof is based on a concentration inequality
\cite[Theorem~3.10]{MT20}.
Let us also mention the concentration inequality~\cite[Theorem~21]{CEP96},
whose proof is the inspiration for the work that follows.
Finally we cite Chapter~7 of~\cite{She05}.
There, a large deviations principle is proved
for a wide class of random surface models,
and \cite{She05} also explains how concentration inequalities can be derived
from that large deviations principle under certain conditions
(i.e.\ uniqueness of the minimizer of the rate function).

\begin{figure}
	\centering
	\begin{subfigure}{0.24\textwidth}
		\includegraphics[width=\textwidth]{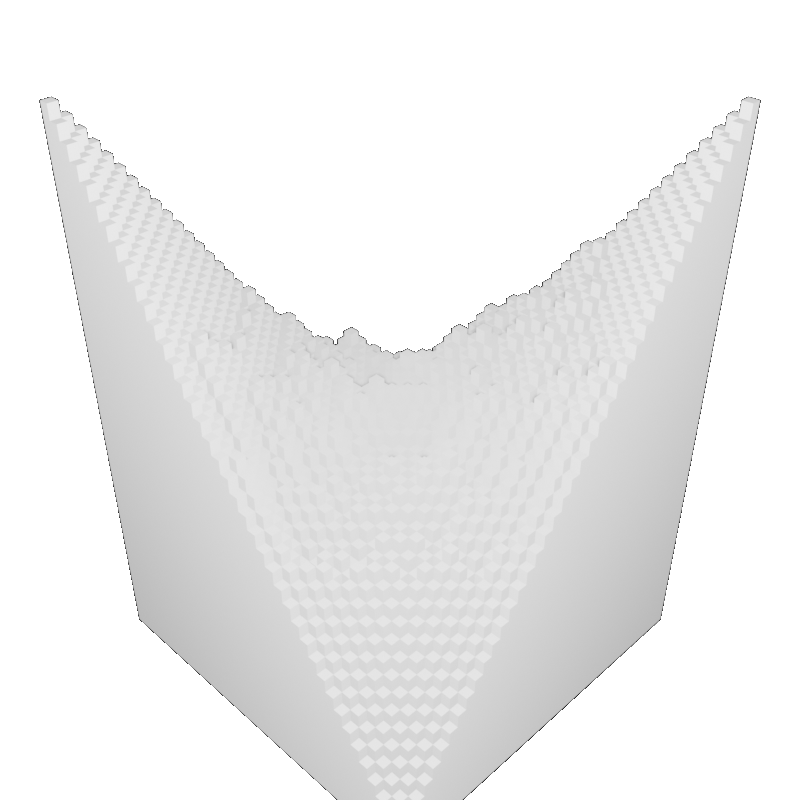}
		\caption{$25 \times 25$ system.}
		\label{figa}
	\end{subfigure}
	\begin{subfigure}{0.24\textwidth}
		\includegraphics[width=\textwidth]{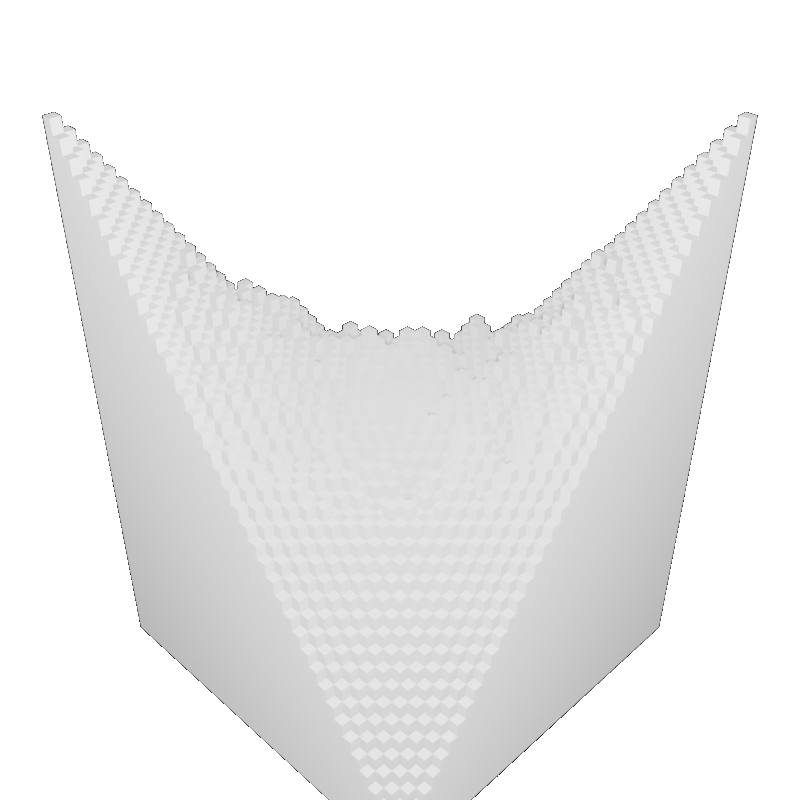}
		\caption{$25 \times 25$ system.}
		\label{figb}
	\end{subfigure}
	\begin{subfigure}{0.24\textwidth}
		\includegraphics[width=\textwidth]{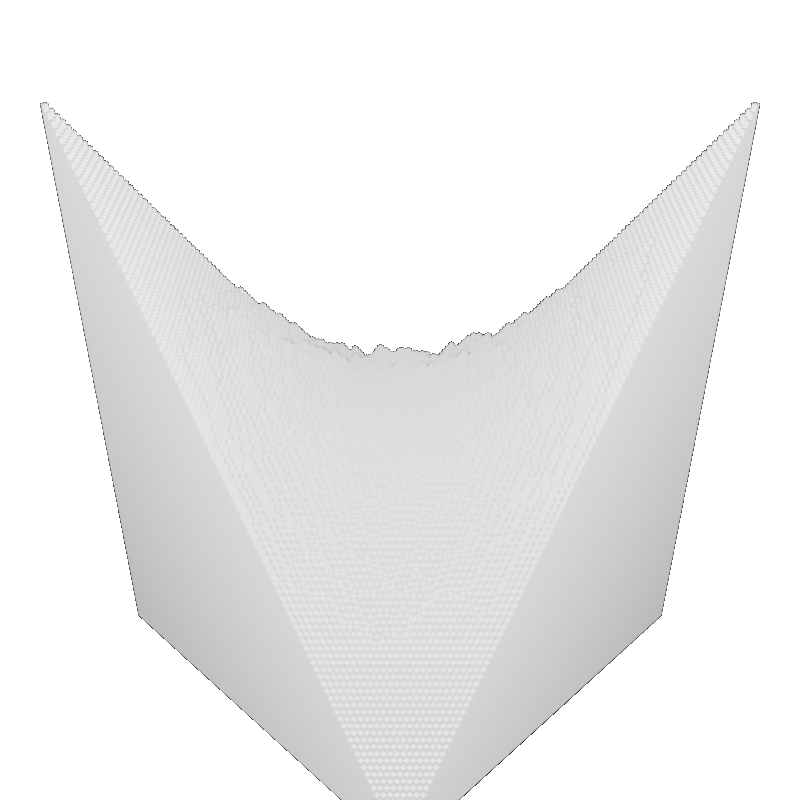}
		\caption{$100 \times 100$ system.}
		\label{figc}
	\end{subfigure}
	\begin{subfigure}{0.24\textwidth}
		\includegraphics[width=\textwidth]{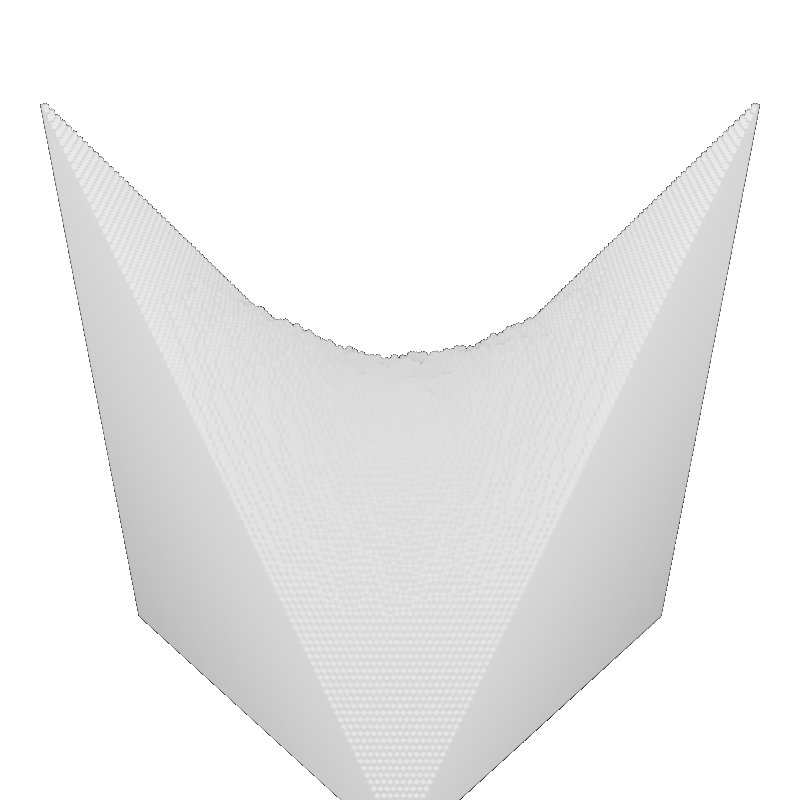}
		\caption{$100 \times 100$ system.}
		\label{figd}
	\end{subfigure}
\caption{
Pictured are 3D plots of height functions on $\Z^2$.
The domains are respectively $25 \times 25$ and $100 \times 100$ grids,
realized as square faces, and the height values assigned by the height function
are indicated by the height of the column above each square face.
Boundary values are fixed, and are chosen so as to increase or decrease
as rapidly as possible along each boundary edge.
It is visually apparent that each sample is close to a smooth limiting shape,
except for small fluctuations, and that on the larger domain,
the fluctuations are smaller relative to the domain size.
This illustrates the conclusion of the concentration inequality.
}
\label{fig}
\end{figure}

Now, let us describe our strategy for proving the homogenized concentration
inequality.
For the simpler case of random surface models without random perturbations,
there are two approaches in the literature
to proving the concentration inequality directly:
an approach that leverages monotonicity
and stochastic dominance (e.g.~\cite{CEP96,LT20})
and a dynamic approach using a natural Markov chain
on height functions (e.g.~\cite{MT20}).
The second approach does not easily carry over to the case
of random perturbations.
It relies on monotonicity on the quenched level
(i.e.\ with fixed random potential~$\om$),
but monotonicity only occurs in the annealed setting
(i.e.\ after averaging over~$\om$).
This complicates the dynamic structure of the second approach.
Therefore we use the first approach.
See the statement and proof of Lemma~\ref{lem_18} for details.

The remainder of this note is divided into two sections.
Section~\ref{s_defs} defines the model in detail,
and Section~\ref{s_proof} provides the exact statement
proof of the concentration inequality.

\section{Definitions} \label{s_defs}

The notation and definitions below are similar to those in~\cite{KMT21+}.
For further explanation and further examples,
we kindly refer the reader to~\cite[Section~2]{KMT21+}.

Throughout the sequel, whenever $\Z^m$ is viewed as a graph,
the edge set is always taken to be the collection of all nearest neighbor edges;
in other words, two points $x,y \in \Z^m$ are adjacent if and only if
the $\ell^1$ distance $|x-y|_1$ is exactly $1$.
Likewise, subsets of $R \subset \Z^m$ are taken to be graphs
with the induced subgraph structure,
so any two vertices $x,y \in \La \subset \Z^m$ are adjacent
if and only if $|x-y|_1 = 1$.
We assume that the subgraph $R \subset \Z^m$ is connected.
The edges of these graphs are written as $e_{x,y}$ for the edge connecting
$x,y \in \Z^m$.
These are undirected edges, i.e.\ $e_{y,x} = e_{x,y}$.

\begin{definition}[Height function] \label{d_ht_func}
A \defn{height function} on a connected subgraph $R \subset \Z^m$
is a parity-preserving graph homomorphism $h_R: R \to \Z$.
In other words, if $x, y \in R$ and $x \sim y$,
then $\abs{ h_R(x) - h_R(y) } = 1$,
and for any $x = (x_1, \dotsc, x_m) \in R$,
\begin{equation} \label{e_ht_func_parity}
	h_R(x) = \sum_{i=1}^m x_i \pmod 2 .
\end{equation}

The set of all height functions on $R$ is denoted as
\begin{equation}
	M(R)
	:= \bigl\{ h_R: R \to \Z
	\, \big| \, \text{$h_R$ is a height function} \bigr\}
\end{equation}
and given a subgraph $R' \subset R$ and a height function
$h_{R'} \in M(R')$,
the set of height functions extending $h_{R'}$ to $R$ is denoted as
\begin{equation}
	M(R; h_{R'})
	:= \bigl\{ h_R \in M(R)
	\, \big| \, h_R|_{R'} = h_{R'} \bigr\} .
\end{equation}
\end{definition}

The set $M(R)$ is never empty; indeed, the function $h_R$ that maps each point
in $R$ to either $0$ or to $1$ according to the parity of the source point
is a height function.
A natural question to ask is under which conditions the set of extensions
$M(R; h_{R'})$ is nonempty. Necessary and sufficient conditions are given by
the Kirszbraun theorem for functions on $\Z^d$,
which is a discrete analogue of the
classical Kirszbraun theorem for Lipschitz functions on $\R^d$.
This is a well-known result (see e.g.\ \cite[Lemma 4.3.1]{She05}),
and we omit the proof from this article.

\begin{theorem}[Kirszbraun theorem for~$\Z^d$] \label{p_kirszbraun}
Let~$R$ be a connected region of $\Z^m$, let~$R'$ be a subset of~$R$,
and let~$h': R' \to \Z$ be a graph homomorphism that preserves parity.
There exists a graph homomorphism~$h: R \to \Z$
such that~$h = h'$ on~$R'$ if and only if for all~$x, y \in R'$,
\begin{equation} \label{e_kirszbraun}
	\bigl| h'(x) - h'(y) \bigr| \le d_R(x, y),
\end{equation}
where~$d_R$ denotes the graph distance on~$R$.
\end{theorem}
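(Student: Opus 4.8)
The plan is to prove the two implications separately. The ``only if'' direction is immediate; for the ``if'' direction I would carry out a greedy vertex-by-vertex extension, the one delicate point being that the parity requirement~\eqref{e_ht_func_parity} never obstructs the extension.

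For the ``only if'' direction, suppose $h\colon R\to\Z$ is a graph homomorphism with $h\restr{R'}=h'$. Since $h$ sends adjacent vertices of $R$ to adjacent vertices of $\Z$, we have $\abs{h(z)-h(z')}=1$ whenever $z\sim z'$ in $R$. Given $x,y\in R'$, choose a geodesic $x=z_0\sim z_1\sim\dotsb\sim z_k=y$ in $R$ with $k=d_R(x,y)$; then $\abs{h'(x)-h'(y)}=\abs{h(x)-h(y)}\le\sum_{i=1}^k\abs{h(z_i)-h(z_{i-1})}=k=d_R(x,y)$, which is~\eqref{e_kirszbraun}.

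For the converse, assume~\eqref{e_kirszbraun}. I would construct $h$ by repeatedly enlarging its domain one vertex at a time, maintaining the invariant that the current partial function $g\colon S\to\Z$, with $R'\subseteq S\subseteq R$, preserves parity in the sense of~\eqref{e_ht_func_parity} and satisfies $\abs{g(u)-g(w)}\le d_R(u,w)$ for all $u,w\in S$. Starting from $g=h'$ on $S=R'$ the invariant holds by hypothesis together with~\eqref{e_kirszbraun}. Now take $v\in R\setminus S$. An integer $a$ may legally be assigned to $v$ precisely when $a\in[g(u)-d_R(v,u),\,g(u)+d_R(v,u)]$ for every $u\in S$, and this intersection of intervals is nonempty: for any $u,w\in S$ the invariant and the triangle inequality give $g(u)-g(w)\le d_R(u,w)\le d_R(v,u)+d_R(v,w)$, so $g(u)-d_R(v,u)\le g(w)+d_R(v,w)$, whence $\ell:=\max_{u\in S}\bigl(g(u)-d_R(v,u)\bigr)\le\min_{w\in S}\bigl(g(w)+d_R(v,w)\bigr)=:r$. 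Each of these extrema is attained, being an extremum of a nonempty set of integers bounded on the relevant side.

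The one place the parity hypothesis enters is in checking that $[\ell,r]$ contains an integer of the same parity as $\sum_i v_i$. Since $\Z^m$ is bipartite, every walk in $R$ from $v$ to $u$ has length congruent to $\abs{v-u}_1\equiv\sum_i v_i-\sum_i u_i\pmod 2$, so $d_R(v,u)\equiv\sum_i v_i-\sum_i u_i\pmod 2$; together with $g(u)\equiv\sum_i u_i\pmod 2$ this shows that both endpoints $g(u)\pm d_R(v,u)$, and hence $\ell$ itself, are congruent to $\sum_i v_i\pmod 2$. Thus setting $g(v):=\ell$ extends the invariant to $S\cup\{v\}$. Exhausting $R\setminus R'$ in this way — by induction on $\abs{R\setminus R'}$ when $R$ is finite, and by a routine Zorn's lemma argument in general, using that the admissible interval for each new vertex is a nonempty interval with integer endpoints of the correct parity — yields a parity-preserving $h\colon R\to\Z$ with $h\restr{R'}=h'$ and $\abs{h(x)-h(y)}\le d_R(x,y)$ for all $x,y\in R$; in particular $\abs{h(x)-h(y)}\le 1$ for $x\sim y$, and this cannot be $0$ by parity, so $h$ is a genuine graph homomorphism. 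I expect the only real obstacle to be the parity bookkeeping just described; the rest is the standard McShane-type one-point extension argument.
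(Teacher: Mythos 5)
Your proof is correct. Note that the paper itself does not prove Theorem~\ref{p_kirszbraun}: it explicitly omits the argument, deferring to \cite[Lemma~4.3.1]{She05}, so there is no in-paper proof to compare against. What you give is the standard discrete McShane/Kirszbraun one-point extension: the ``only if'' direction by telescoping along a geodesic, and the ``if'' direction by greedily assigning to each new vertex $v$ the left endpoint $\ell=\max_{u\in S}\bigl(g(u)-d_R(v,u)\bigr)$ of the admissible interval, with nonemptiness from the triangle inequality. The one genuinely model-specific point --- that bipartiteness of $\Z^m$ forces $d_R(v,u)\equiv\sum_i v_i-\sum_i u_i\pmod 2$, so every candidate endpoint, hence $\ell$, has the parity required by~\eqref{e_ht_func_parity}, and consequently adjacent vertices cannot receive equal heights --- is exactly the point that needs checking, and you handle it correctly. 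The only (trivial) edge case you gloss over is $R'=\emptyset$, where the extremum over $S$ is vacuous; there any parity-respecting base height function serves as the extension.
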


\begin{remark}
One might hope to derive sufficient conditions that are easier to verify,
such as the following: suppose that $R'$ is a line segment,
i.e.\ $R' = \{v, v+e, v+2e, \dotsc, v+(\ell-1)e\}$
for some $\ell \in \N$ and $v,e \in \Z^d$ with $|e|_1 = 1$.
Then $R'$ is an isometric subgraph of $R$,
meaning that for any points $x,y \in R'$, $d_{R'}(x,y) = d_R(x,y)$.
Indeed, clearly $d_{R'}(x,y) \ge d_R(x,y)$,
and for any $i,j$, by observation
$d_{R'}(v+ie, v+je) = |i-j| = d_{\Z^d}(v+ie, v+je) \le d_R(x,y)$.
Since $R'$ is an isometric subgraph of every $R \supseteq R'$,
it follows that for any $h_{R'} \in M(R')$ and any $x,y \in R'$,
we have $|h_{R'}(x) - h_{R'}(y)| \le d_{R'}(x,y) = d_R(x,y)$.
Thus by the Kirszbraun theorem $h_{R'}$ admits at least one extension to $R$.
This idea can be pushed a little bit farther.
For example, the exact same argument works whenever $R'$ is a geodesic,
i.e.\ a shortest path between two points in $R$;
a similar argument via the isometric subgraph property applies
when ever $R'$ is a box,
i.e.\ when $R' = \{(z_1,\dotsc,z_i) \in \Z^d \colon a_i \le z_i \le b_i\}$
for some $a_1 \le b_1, \dotsc, a_d \le b_d$.
However, this approach ultimately is not fruitful for our current purposes.
The motivating example we use is the case where $R$ is a box
and $R'$ is its boundary, and one can check easily that this subgraph $R'$
is not isometric. For example, in two dimensions, if
$R = \{0,1,2\} \times \{0,1,2\}$,
then the opposite midpoints $(0,1)$ and $(2,1)$ have
$d_{R'}((0,1), (2,1)) = 4 > 2 = d_R((0,1), (2,1))$.
Indeed, one can check that there is a unique height function on $R'$
with $h_{R'}((0,1)) = 1$ and $h_{R'}((2,1)) = 5$,
which violates the Kirszbraun hypothesis~\eqref{e_kirszbraun}.
As such, in the sequel we must account for the possibility that
$M(R; h_{R'})$ may be empty for some or even for all $h_{R'} \in M(R')$.
\end{remark}

Moving on to the random potential, the assumptions we make are the same as
in~\cite{KMT21+}, namely:

\begin{assumption}[Random potential~$\omega$]\label{a_omega}
Let $\omega = (\omega_e)_{e \in E(\Z)} \in \R^{E(\Z)}$
denote a random potentials, defined on the set of edges~$E(\Z)$ of~$\Z$,
that satisfies the following assumptions:
\begin{itemize}
	\item $\omega$ is almost surely finite, in the sense that
	\begin{align}\label{e_omega_bdd}
		 C_\omega
		 := 1 \vee \sup_{e \in E(\Z)} \abs{ \omega_{e} }
		 \in L^1
		 \qquad \text{(i.e.\ $\mathbb{E}[C_\omega] < \infty$)}.
	\end{align}
\item $\omega$ is shift invariant, i.e.\
	for any $k \in \N$, any edges
	$e_{x_1,y_1}, \dotsc, e_{x_k,y_k} \in E(\Z)$,
	any~$z \in \Z$, and any bounded and measurable function
	$\xi: \R^k \to \R$,
	\begin{equation}\label{e_shift_invariance}
		\E \bigl[
			\xi(\om_{e_{x_1,y_1}} , \dotsc, \om_{e_{x_k,y_k}})
		\bigr]
		= \E \bigl[
			\xi(\om_{e_{x_1+z,y_1+z}}, \dotsc,
				\om_{e_{x_k+z,y_k+z}})
		 \bigr] .
	\end{equation}
\item Moreover, the random potential~$\omega$ is ergodic
	with respect to the set of shifts
	$\{\tau_z \,|\, z \in \Z, \, z = 0 \pmod 2 \}$,
	where $\tau_z(e_{x,y}) = e_{x+z,y+z}$.
	This means that if $E = \tau_2^{-1}(E)$,
	then $\P(E) \in \{0, 1\}$.
\item As a matter of normalization, assume that
	$ \E [ \omega_{(0,1)} ] = 0$.
\end{itemize}
\end{assumption}

\begin{definition}[The Hamiltonian]
Given a finite connected graph $R \subset \Z^m$,
the interior Hamiltonian $H^\circ_R: M(R) \to \R$
and the exterior Hamiltonian $H^+_R: M(R^+) \to \R$
are defined as
\begin{equation} \label{e_def_hamiltonian}
	H^\circ_R(h_R, \omega) = \sum_{e \in E(R)} \omega_{h_R(e)}
	\qquad \text{and} \qquad
	H^+_R(h_{R^+}, \omega) = \sum_{e \in E(R^+)} \omega_{h_{R^+}(e)} ,
\end{equation}
where $R^+ := R \cup \{x \in \Z^m \colon \exists y \in R, x \sim y\}$
is the extension of the subgraph $R$ by all neighboring vertices in the ambient
graph $\Z^m$.
\end{definition}

\begin{definition}[Quenched measure] \label{d_meas}
Given a realization~$\omega$ of the random potential
and a set $A \subset M(R)$ of height functions,
the partition function $Z_\omega(A)$ is given by
\begin{equation} \label{e_d_partition_function}
	Z_\omega(A)
	= \sum_{h_R \in A} \exp \bigl( H^\circ_R(h_R, \omega) \bigr) \,.
\end{equation}

Given a subgraph~$R' \subset R$,
a height function $h_{R'} \in M(R')$,
and a realization~$\omega$ of the random potential,
the perturbed measure $\muMRh(\cdot, \omega)$
is the probability measure on the finite space~$M(R; h_{R'})$,
defined on atoms~$h_R \in \MRh$ by
\begin{equation}\label{e_d_gibbs_measure}
	\muMRh(h_R, \om)
	:= \frac{\exp \bigl( H^\circ_R(h_R, \omega) \bigr)}
		{Z_\omega \bigl( M(R; h_{R'}) \bigr)} .
\end{equation}
\end{definition}

\begin{definition}[Annealed measure] \label{d_ann}
Below we prove that, for any $R' \subset R$ and any $h_{R'} \in M(R')$,
the function $(\om, A) \mapsto \muMRh(A, \om)$
from~$\Omega \times \PP(M(R; h_{R'}))$ to~$[0,1]$
is a probability kernel.
In other words, for all $\om \in \Omega$,
$A \mapsto \muMRh(A, \om)$ is a probability measure,
and for all $A \subset M(R; h_{R'})$,
$\om \mapsto \muMRh(A, \om)$ is measurable.
Thus, the formula
\[
	(\muMRh \circ \P)(A) := \E[ \muMRh(A, \om) ],
	\quad A \subset M(R; h_{R'})
\]
defines a probability measure~$\muMRh \circ \P$ on~$M(R; h_{R'})$.
Moreover, for any function~$f: M(R; R') \to \R$,
if~$h_R$ is a random variable with law~$\muMRh \circ \P$
then
\begin{equation} \label{e_cond_exp}
	E[f(X)] = \E \bigl[ E_{\muMRh(\cdot, \om)}(f) \bigr] .
\end{equation}
\end{definition}

\begin{proof}[Proof (of claims in Definition~\ref{d_ann})]
By definition~$\muMRh(\cdot, \om)$ is a probability measure for fixed~$\om$.
The second property follows from how $\muMRh(h_R, \om)$ is defined:
the numerator is a sum of finitely many random potential values~$\om_e$
inside of the (continuous, hence measurable) exponential function,
and the denominator is a finite sum over copies of the numerator,
only using different height functions to select the random potential
values~$\om_e$.
The equation~\eqref{e_cond_exp} is a standard identity for regular conditional
distributions and its proof is a straightforward exercise;
see e.g.\ \cite[Exercise 5.1.14]{Durrett}.  % Maybe update citation to 5th ed.
Note that since~$M(R;R')$ is finite,
each function~$f:M(R;R') \to \R$ is measurable.
\end{proof}

\section{The concentration inequality} \label{s_proof}

Having stated the definitions above,
we are prepared to state the main result of this note,
then move on to proofs.

\begin{theorem} \label{thm_conc}
Let $R_n \subset \Z^m$ be a sequence of finite, connected subgraphs
such that $\diam(R_n) := \max_{x,y \in R_n} |x-y|_1 \le An$ for some $A>0$.
Let $\ep > 0$ and $h_\dRn \in M(\dRn)$ be given,
and let $\mu_n = \muMRn(\cdot, \omega)$
denote the (perturbed) distribution on $\MRn$.
Then for any $c > 0$ and any $n \in \N$,
\begin{equation} \label{e_conc}
	\mu_n \circ \P \biggl( \max_{v \in R_n} \,
		\abs[\Big]{ h_{R_n}(v) - E_{\mu_n}(h_{R_n}(v)) }
			\ge c \sqrt{n} \biggr)
	\le 2 \, |R_n| \, e^{-nc^2/A} .
\end{equation}
\end{theorem}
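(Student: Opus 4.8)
The plan is to follow the method of \cite{CEP96} (see also \cite{LT20}), which establishes concentration one vertex at a time via monotonicity and stochastic dominance, but carried out in the annealed setting where the height function model is monotone. Fix a vertex $v \in R_n$ and let $X = h_{R_n}(v)$ under $\mu_n \circ \P$. The key structural input is a Lipschitz-type bound: for two boundary conditions $h_{\dRn}, h'_{\dRn} \in M(\dRn)$ that differ by shifting the value at a single boundary vertex (and are otherwise identical, assuming both remain admissible), the induced annealed measures are stochastically ordered, and moreover iterating this shows that the annealed expectation $E_{\mu_n}(X)$ is a $1$-Lipschitz function (with respect to sup-norm on admissible boundary data) — or more precisely, that one can interpolate between the actual boundary condition and a "tilted" one. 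This is the content of the lemma referred to as Lemma~\ref{lem_18} in the text; I would invoke it as a black box.

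Granting monotonicity on the annealed level, the standard argument runs as follows. First I would establish an exponential moment bound of the form $E[e^{\lambda(X - E_{\mu_n}(X))}] \le e^{\lambda^2 D / 2}$ where $D$ is an upper bound on the variance-like quantity, and crucially $D \le \diam(R_n) \le An$: the point is that $X = h_{R_n}(v)$ always lies within $d_{R_n}(v, \dRn) \le \diam(R_n) \le An$ of its boundary-determined "baseline," so $X$ takes values in an interval of length $O(n)$, giving a Hoeffding-type bound with $D = An$. Then Markov's inequality applied to the exponential moment gives, for the optimal choice of $\lambda = c\sqrt n / A$,
\[
	\mu_n \circ \P\bigl( X - E_{\mu_n}(X) \ge c\sqrt n \bigr)
	\le e^{-\lambda c \sqrt n + \lambda^2 An / 2}
	\le e^{-nc^2/(2A)},
\]
and symmetrically for the lower tail; combining and then union-bounding over all $v \in R_n$ yields the stated bound with the factor $2|R_n|$ (up to optimizing the constant in the exponent, which the statement records as $e^{-nc^2/A}$ after absorbing the factor $2$ in the exponent appropriately, or one simply tracks constants to land exactly on the claimed form).

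There is a subtlety I would address carefully: the monotonicity/stochastic-domination machinery requires that the intermediate boundary conditions used in the interpolation actually admit extensions, i.e.\ that $M(R_n; \cdot)$ is nonempty along the way — this is exactly the issue flagged in the Remark after the Kirszbraun theorem. The resolution is that one only ever compares $h_{R_n}$ against boundary data obtained by \emph{adding a fixed admissible height function} (or by the natural coupling with an unconstrained/free version on a slightly enlarged region), so that admissibility is preserved; alternatively, one works directly with the coupling of $\mu_n \circ \P$ under two comparable boundary conditions furnished by Lemma~\ref{lem_18}, which already builds in the nonemptiness. The main obstacle, and the reason the annealed formulation is essential, is precisely this: monotonicity fails at the quenched level (for fixed $\omega$ the measure $\muMRn(\cdot,\omega)$ need not be monotone in the boundary condition), so the stochastic-domination step must be performed after averaging over $\omega$ using \eqref{e_cond_exp}, and one must check that the relevant FKG-type or coupling inequalities survive the annealing — this is where the bulk of the work in Lemma~\ref{lem_18} lies, and the present theorem is then a relatively short deduction from it via the exponential-moment computation above.
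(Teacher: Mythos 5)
Your overall architecture (single-vertex concentration plus a union bound over $v \in R_n$) matches the paper's, and your identification of the monotonicity/stochastic-dominance input is on target. But the step where you actually produce the exponential moment bound has a genuine gap. You justify $E[e^{\lambda(X - EX)}] \le e^{\lambda^2 D/2}$ with $D \le An$ by observing that $X = h_{R_n}(v)$ ranges over an interval of length $O(n)$. A bounded range of length $L$ gives, via Hoeffding's lemma, a sub-Gaussian parameter of order $L^2$, not $L$: your argument yields $D \sim A^2 n^2$, and the resulting Chernoff bound is of order $e^{-c^2 n^2/D} \sim e^{-c^2/A^2}$, a constant that does not decay in $n$ at all. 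A variance proxy linear in $n$ is exactly what cannot be extracted from the range alone.

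The paper obtains the linear-in-$n$ proxy by a mechanism your sketch omits (Lemma~\ref{lem_conc}): fix a path $x_0 \in \partial R_n, x_1, \dotsc, x_{l-1} = v$ of length $l \le An$, form the Doob martingale $M_k = E[h_{R_n}(v) \mid h_{R_n}(x_0), \dotsc, h_{R_n}(x_{k-1})]$ under $\mu_n \circ \P$, and use Corollary~\ref{cor_19} (the annealed consequence of Lemma~\ref{lem_18}) to show $|M_{k+1} - M_k| \le 2$: given the heights along the path so far, the two admissible values of $h_{R_n}(x_k)$ differ by $2$, and stochastic dominance bounds the resulting difference of conditional expectations by $2$. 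Azuma--Hoeffding then gives a variance proxy $\sum_k c_k^2 = O(l) = O(n)$, which is what produces $e^{-nc^2/A}$ after the union bound. So the martingale decomposition along a path is not an optional refinement; it is the essential idea, and the bounded-range shortcut cannot replace it. A secondary correction: in the paper the stochastic dominance of Lemma~\ref{lem_18} holds quenched, for each fixed $\omega$; annealing enters only in Corollary~\ref{cor_19}, where the vertical shift by $2$ must be absorbed into a shift $\tau_2$ of the potential, which is distribution-preserving only after taking $\E$.
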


\begin{remark}
Theorem~\ref{thm_conc} gives quantitative bounds for the probability that
a height function~$h_R$ differs from expected value on the scale of~$\sqrt{n}$.
The probability bounds are exponential in a constant times $n$,
which comes about because of the one-dimensional
nature of the Azuma--Hoeffding inequality.
In comparison, results such as the large deviations principle
of~\cite{She05,LT20} achieve a volume-order term in the exponential.
The cost of those results is that control over the size of fluctuations
is not quantitative.
For example, ignoring the random potential~$\omega$
and using the uniform measure on~$M(R; h_{\dP R})$ instead,
the large deviations principle implies that
$\mu_n( \max_v |h_{R_n}(v) - E_{\mu_n} (h_{R_n}(v))| \ge \ep )
	\le \exp(-n^m I(\ep))$.
Here $I(\ep) > 0$ is (related to) the rate function of the LDP,
which does not admit an obvious closed form expression in terms of~$\ep$.
It would be interesting if we could obtain a quantitative concentration result
with volume-order term in the exponential of the probability bound.
\end{remark}

We will build up to the proof of the concentration inequality
via a few intermediate results.
Lemma~\ref{lem_18} below establishes the monotonicity property,
which is the main ingredient of the proof.
We derive from it Corollary~\ref{cor_19},
which is used in the proof of an auxiliary concentration inequality
in Lemma~\ref{lem_conc}.
The difference between Lemma~\ref{lem_conc} and the main theorem
is that the former addresses only a single point $v \in R$,
whereas the latter concerns the maximum deviation from the mean
over the entire domain.
The statements and proofs of these results are based on
the method presented in~\cite{CEP96};
we cite the analogous steps where appropriate below.
Differences arise starting with Corollary~\ref{cor_19} below,
where the shift-invariant and ergodic properties of the law of~$\om$
must be used to account for the fact that height functions
with different base heights ``see'' different random potential values~$\om_e$.
However, the essential steps of the proof still goes through,
since even under the influence of~$\om$ the relevant measures
are Gibbs measures, based upon a finite-range potential
(indeed, a nearest-neighbor potential, which enforces the Lipschitz property).
It seems like the proof should extend to other finite-range models
and perhaps beyond, but for brevity we will not explore that idea further here.

\begin{lemma}[cf.\ {\cite[Lemma 18]{CEP96}}] \label{lem_18}
Let $R' \subset R$ and let $h_{R'}, \tlh_{R'} \in M(R')$
be such that $h_{R'} \le \tlh_{R'}$
and that~$M(R; h_{R'})$ and~$M(R; \tlh_{R'})$ are not empty.
Then for any realization $\om$,
$\muMRh(\cdot, \om)$ is stochastically dominated by $\muMRth(\cdot, \om)$.
More precisely, there exists a measurable function
$\pi: M(R; h_{R'}) \times M(R; \tlh_{R'}) \times \Omega \to [0,1]$
such that:
\begin{itemize}
\item
for almost every $\omega$,
$\pi(\cdot, \cdot, \omega)$ is a coupling, i.e.\
\[
	\sum_{\tlh_R} \pi(h_R, \tlh_R, \omega) = \muMRh(h_R, \om)
	\qquad \text{and} \qquad
	\sum_{h_R} \pi(h_R, \tlh_R, \omega) = \muMRth(\tlh_R, \om) ,
\]
and
\item
$
	\pi \bigl( \{h_R \le \tlh_R \}, \omega \bigr) = 1 .
$
\end{itemize}
\end{lemma}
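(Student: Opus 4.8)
The plan is to realise $\pi$ as the Cesàro limit of the law of a monotone coupling of two heat‑bath Glauber chains, one on $\MRh$ and one on $\MRth$, driven by common randomness, with all $\om$‑dependence manifestly measurable along the way. The first ingredient is the lattice structure: since every height function is $1$‑Lipschitz and parity‑preserving, for adjacent $x \sim y$ the two values compared in forming $(h_R \vee \tlh_R)(x)$ and in forming $(h_R \vee \tlh_R)(y)$ (and likewise for $\wedge$) lie in opposite parity classes, so together with the Lipschitz bound their difference is exactly $\pm 1$; hence $h_R \vee \tlh_R$ and $h_R \wedge \tlh_R$ are again height functions, and $M(R)$ is a finite distributive lattice under the pointwise order. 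Consequently $\MRh$ and $\MRth$ are sublattices, and because $h_{R'} \le \tlh_{R'}$, whenever $h_R \in \MRh$ and $\tlh_R \in \MRth$ one has $h_R \wedge \tlh_R \in \MRh$ and $h_R \vee \tlh_R \in \MRth$. In particular, fixing any $g \in \MRh$ and $\tilde g \in \MRth$ (both nonempty by hypothesis), the pair $(g \wedge \tilde g,\, g \vee \tilde g)$ is an ordered pair in $\MRh \times \MRth$, which will serve as the initial state of the coupled chain.

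For a fixed realisation $\om$, I would let the single‑site heat‑bath chain on $\MRh$ at each step pick $v \in R \setminus R'$ uniformly and resample $h_R(v)$ from its $\muMRh(\cdot,\om)$‑conditional law given the values on $R \setminus \{v\}$. The parity constraint is again decisive: the neighbours of $v$ inside $R$ all share the parity opposite to $v$, so either they all equal a common value $m$, in which case the admissible new values are exactly $\{m-1, m+1\}$ with $\P(\text{new value} = m+1) = \bigl(1 + e^{d_v(\om_{e_{m-1,m}} - \om_{e_{m,m+1}})}\bigr)^{-1}$, where $d_v$ is the degree of $v$ in $R$, or else their values span a range of exactly $2$ and the new value at $v$ is forced. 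This chain is reversible with respect to $\muMRh(\cdot,\om)$ (which is strictly positive), and it is irreducible: given $h_R$ not equal to the lattice maximum $h_R^{\star} := \max \MRh$, a vertex $v$ minimising $h_R(v)$ over the nonempty deficient set $\{v : h_R(v) < h_R^{\star}(v)\}$ must have all its neighbours at height $h_R(v)+1$ — otherwise such a neighbour $w$ at height $h_R(v)-1$ would, by minimality of $h_R(v)$, be non‑deficient, whence $h_R^{\star}(w) = h_R(v)-1$ and $h_R^{\star}(v) \le h_R(v)$, contradicting deficiency of $v$ — so $h_R(v)$ may be raised by $2$ in one step; iterating decreases $\sum_v (h_R^{\star}(v) - h_R(v))$ and reaches $h_R^{\star}$, and reversibility makes reachability symmetric. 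Finally, the transition probabilities are continuous in $\om$, so every power of the kernel, and every Cesàro average of those powers, is measurable in $\om$. The identical construction applies to $\MRth$.

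Now I would couple the two chains by using, at each step, the same uniformly chosen vertex $v$ and the same uniform variable $U \in [0,1]$, converting $U$ into the new value by the same quantile rule in each chain; the $X$‑marginal of the coupled dynamics is then exactly the heat‑bath chain on $\MRh$, and similarly for $\MRth$. The crux is that this update preserves the pointwise order: if $h_R \le \tlh_R$ before an update at $v$, then $h_R(v) \le \tlh_R(v)$ afterwards. I would verify this by a short case split according to whether $v$ is forced or free in each chain, using that the neighbour‑minimum and neighbour‑maximum of $h_R$ at $v$ are $\le$ those of $\tlh_R$ and that all these extrema lie in a single parity class: when both chains are free at the same level the shared $U$ yields equal new values, and in every remaining case distinct admissible levels differ by at least $2$, which leaves exactly the room needed for the inequality. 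Starting the coupled chain from $(g \wedge \tilde g,\, g \vee \tilde g)$, the order is preserved for all time. I would then set $\pi(\cdot,\cdot,\om) := \lim_{T\to\infty} \tfrac1T \sum_{t=1}^{T}(\text{law of the coupled chain at step }t)$, which exists for every $\om$ by the ergodic theorem for finite Markov chains, is supported on $\{h_R \le \tlh_R\}$ because each time‑$t$ law is, has marginals $\muMRh(\cdot,\om)$ and $\muMRth(\cdot,\om)$ because each marginal chain is irreducible with the corresponding Gibbs measure as its unique stationary law, and is measurable in $\om$ because the finite‑$T$ averages are.

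The main obstacle I anticipate is the order‑preservation case analysis for the coupled update: one must handle that the two chains may sit at different neighbour‑levels at $v$, that ``forced'' versus ``free'' may differ between them, and — the one genuinely model‑specific point — that the parity gap of $2$ always rescues the inequality, which is precisely where the graph‑homomorphism (as opposed to merely Lipschitz) structure is used. A secondary, bookkeeping‑level point is the measurable dependence on $\om$; routing everything through Cesàro averages of the coupled kernel avoids having to select convergent subsequences measurably. One could instead deduce a monotone coupling for each fixed $\om$ from Holley's inequality — the edgewise identity $H^\circ_R(h_R \vee \tlh_R,\om) + H^\circ_R(h_R \wedge \tlh_R,\om) = H^\circ_R(h_R,\om) + H^\circ_R(\tlh_R,\om)$ holds for every $\om$, since on each edge the unordered pair of $\Z$‑edge values is unchanged by passing to $\vee$ and $\wedge$ — together with Strassen's theorem, but measurability of the resulting coupling would still need a separate argument, so the explicit dynamical construction seems cleaner.
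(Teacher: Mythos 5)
Your argument is correct, but it takes a genuinely different route from the paper. The paper proves stochastic domination by induction on $|R \setminus R'|$: it adjoins a single vertex $v''\in R\setminus R'$ adjacent to the relative boundary of $R'$, writes each of $\muMRh(\cdot,\om)$ and $\muMRth(\cdot,\om)$ as a convex combination of the (at most two) measures obtained by conditioning on the height at $v''$, checks via parity that every admissible value of $h_R(v'')$ is at most every admissible value of $\tlh_R(v'')$, and applies the induction hypothesis to the resulting pairs, assembling the coupling from these conditional couplings. You instead exhibit a monotone coupling dynamically, via coupled heat-bath Glauber chains run from an ordered initial pair supplied by the lattice structure of $M(R)$, recovering the correct marginals through Ces\`aro averaging. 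Both are standard devices for monotone nearest-neighbour systems, and the model-specific content is identical in each: the parity gap of $2$ between consecutive admissible heights is exactly what makes the single-site conditional laws (equivalently, the single-site updates) monotone in the surrounding data. Your route buys an explicit, manifestly $\om$-measurable coupling and a reusable piece of infrastructure (a monotone Markov chain for fixed $\om$), at the cost of the order-preservation case analysis, which you only sketch but which does close in every case by the mechanism you name: when both chains are free at a common neighbour level the shared uniform variable forces equal updates (the two conditional laws coincide since they see the same $\om$, the same degree, and the same level), and in every other case the relevant levels differ by at least $2$ by parity. Two small points to make explicit: you need $R'\neq\emptyset$ so that $\MRh$ is finite (the paper needs this too, for the partition function to converge), and the coupled chain on the product space need not be irreducible, so you should invoke Ces\`aro convergence for general finite chains (or pass to the limit marginal by marginal) rather than an ergodic theorem for irreducible chains.
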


\begin{figure}
\centering
\begin{tikzpicture}[x=0.25in, y=-0.25in]
	\newcommand*\prad{4pt}
	\newcommand*\pfil{\hspace\prad}
	\newcommand*\pext[1]{
		\draw #1 circle [radius=\prad];
		\draw #1 +(45:\prad) -- +(225:\prad);
		\draw #1 +(135:\prad) -- +(315:\prad);}
	\newcommand*\pint[1]{\draw #1 circle [radius=\prad];}
	\newcommand*\prbd[1]{\fill #1 circle [radius=\prad];}
	\newcommand*\pbdi[1]{%
		\fill #1 +(-\prad,-\prad) rectangle +(\prad,\prad);}
	\prbd{(1,1)} \pint{(2,1)} \pint{(3,1)} \prbd{(4,1)} \pbdi{(5,1)}
	\prbd{(1,2)} \pint{(2,2)} \pint{(3,2)} \prbd{(4,2)} \pbdi{(5,2)}
	\prbd{(1,3)} \pext{(2,3)} \pint{(3,3)} \prbd{(4,3)} \pbdi{(5,3)}
	\prbd{(1,4)} \prbd{(2,4)} \prbd{(3,4)} \prbd{(4,4)} \pbdi{(5,4)}
	\pbdi{(1,5)} \pbdi{(2,5)} \pbdi{(3,5)} \pbdi{(4,5)} \pbdi{(5,5)}
	\begin{scope}[shift={(8,1)}]
		\node (r) at (0,0) [anchor=north west] {$R = \{$};
			\pint{(r.east)}
			\prbd{(r.east) ++(2.5*\prad,0)}
			\pbdi{(r.east) ++(5.0*\prad,0)}
			\pext{(r.east) ++(7.5*\prad,0)}
			\path (r.east) ++(9.5*\prad,0) node (rend) {$\}$};
		\node (rp) at (0,1) [anchor=north west] {$R' = \{$};
			\prbd{(rp.east)}
			\pbdi{(rp.east) ++(2.5*\prad,0)}
			\path (rp.east) ++(4.5*\prad,0) node {$\}$};
		\node (rr) at (0,2) [anchor=north west] {$\dP_R(R') = \{$};
			\prbd{(rr.east)}
			\path (rr.east) ++(2.0*\prad,0) node {$\}$};
		\node (re) at (0,3) [anchor=north west]
			{$\{v''\}=R'' \setminus R'=\{$};
			\pext{(re.east)}
			\path (re.east) ++(2.0*\prad,0) node (reend){$\}$};
		\path (r.north west) +(135:0.125in) coordinate (tl);
		\path (re.south west) +(225:0.125in) coordinate (bl);
		\path (reend.south east) +(315:0.125in) coordinate (br);
		\path (br |- tl) coordinate (tr);
		\draw (tl) -- (tr) -- (br) -- (bl) -- cycle;
	\end{scope}
\end{tikzpicture}
\caption{%
An example of the sets relevant to the proof of Lemma~\ref{lem_18}.
On the left is a $5 \times 5$ subset of $\Z^2$,
with points decorated according to the key on the right.%
}
\label{f_lem18}
\end{figure}

\begin{proof}
Consider the ``relative boundary''
$\dP_R(R') := \{v' \in R' \colon \exists v \in R \setminus R', v \sim v'\}$,
i.e.\ the points in $R'$ that are directly adjacent to $R$.
As we shall see below, these are the only essentially relevant points of $R'$.
Indeed, we split the proof into two cases, depending on the restrictions
$h_{R'}|_{\dP_R(R')}$ and $\tlh_{R'}|_{\dP_R(R')}$ on $\dP_R(R')$.
The first case is the easier of the two.
In the first case, the two height functions agree on $\dP_R(R')$.
In the second case, there is a strict inequality
$h_{R'}(v) < \tlh_{R'}(v)$ for at least one point $v \in \dP_R(R')$.

Case 1: Assume first that $h_{R'}(v) = h_{R'}(v)$ for all $v \in \dP_R(R')$.
Since $\dP_R(R')$ may be a proper subset of $R'$,
this does not imply that $h_{R'} = \tlh_{R'}$.
(Although in the case where $\dP_R(R')$, then this case does indeed reduce
to the trivial assertion that $\muMRh$ is stochastically dominated by itself.)
However, it does hold that $h_{R'}$ and $\tlh_{R'}$ have the same extensions
to $R \setminus R'$; to be very precise,
\begin{equation} \label{e_set_id}
	\bigl\{ h_R \restr{R \setminus R'} \colon h_R \in M(R; h_{R'}) \bigr\}
	=
	\bigl\{ \tlh_R \restr{R \setminus R'} \colon
		\tlh_R \in M(R; \tlh_{R'}) \bigr\}.
\end{equation}

For clarity, let us repeat the above paragraph
in the context of Figure~\ref{f_lem18}.
Case 1 of the proof concerns data $h_{R'}$ and $\tlh_{R'}$ that agree
on the solid black circle region (i.e.\ $\dP_R(R')$),
though they may differ on the solid black square points
(i.e. $R' \setminus \dP_R(R')$).
By definition of $\dP_R(R')$, the solid black circle points surround
the white circle region (i.e.\ $R \setminus R'$),
at least relative to the domain $R$.
(Often we will assume that $\dP R \subset R'$,
but that assumption isn't necessary here, and it does not hold in the figure.)
Since $h_{R'}$ and $\tlh_{R'}$ agree on $\dP_R(R')$,
they have the same extensions to the white circle region,
in the sense of~\eqref{e_set_id}.

% Now consider $h_R \ident \tlh_R$ under the identification in (3), i.e.
% $h_R \restr{R \setminus R'} = \tlh_R \restr{R \setminus R'}$

An easy calculation shows that for $h_R \in M(R; h_{R'})$,
\begin{equation} \label{e_rel_compl}
	\muMRh(h_R, \om)
	= \frac{
		\exp \bigl( H^+_{R \setminus R'}(h_R, \omega)
			\bigr)
	}{
		\sum_{f_R \in M(R; h_{R'})}
			\exp \bigl( H^+_{R \setminus R'}(f_R, \omega) \bigr)
	} ,
\end{equation}
where we recall that $H^+_{R \setminus R'}$ denotes the Hamiltonian on domain
$R \setminus R'$, including the edges that cross between $R \setminus R'$
and $\dP_R(R')$.
Therefore in particular that the right-hand expression~\eqref{e_rel_compl}
depends only on the values of the extension $h_R$
restricted $(R \setminus R') \cup \dP_R(R')$.
The same is true of extensions $\tlh_R \in M(R; \tlh_{R'})$.
As such, the obvious bijection between the two sets in~\eqref{e_set_id}
is measure-preserving in both directions.
The existence of a coupling~$\pi$ satisfying the claims of the lemma
follows immediately.

Case 2: Assume instead that $h_{R'}(v) < \tlh_{R'}(v)$ for some $v \in R'$
adjacent to a vertex $v'' \in R \setminus R'$.
In Figure~\ref{f_lem18}, $v''$ is the white circle marked with an ``X,''
and $v$ might be either of the adjacent solid black circles.
Let $R'' = R' \cup \{v''\}$.
We proceed by induction on the cardinality of $R \setminus R'$.
The induction hypothesis states that given any height functions
$h_{R''}, \tlh_{R''} \in M(R'')$ such that $h_{R''} \le \tlh_{R''}$
and such that both $M(R; h_{R''})$ and $M(R; \tlh_{R''})$ are nonempty,
the measure $\mu_{M(R; h_{R''})}$ is stochastically dominated
by $\mu_{M(R; \tlh_{R''})}$.
Note that the base case of the induction occurs when $R \setminus R' = \{v''\}$
has cardinality $1$, and so $R'' = R$; the lemma is trivial in this case.

So let us extend induction hypothesis from $R''$ to $R'$.
From the hypotheses of the lemma, each of $h_{R'}$ and $\tlh_{R'}$
admits at least one extension to $R$.
Therefore each admits at least one extension to $R''$
that in turn admits an extension to $R$.
On the other hand since $R'' \setminus R' = \{v''\}$
is a set of cardinality $1$,
each of $h_{R'}$ and $\tlh_{R'}$ admits at most two extensions to $R'$.
Formally, let $h_{R''}^+$ and $h_{R''}^-$ denote
the two possible extensions of $h_{R'}$ to $R''$,
where $h_{R''}^\pm(v'') = h_{R'}(v) \pm 1$.
Below we will address the possibility that one or the other of these
putative extensions does not exist.
Likewise, let $\tlh_{R''}^\pm \in M(R''; \tlh_{R'})$ denote the two extensions
of $\tlh_{R'}$ to $R''$, subject to the possibility that one or the other of the
two extensions may not exist.

By conditioning on the height value at $v''$,
we see that
\begin{equation} \label{e_mix} \begin{aligned}
	\mu_{M(R; h_{R'})}
	&= p^+ \mu_{M(R; h_{R''}^+)} + p^- \mu_{M(R; h_{R''}^-)}
	\quad\text{and} \\
	\mu_{M(R; \tlh_{R'})}
	&= \tlp^+ \mu_{M(R; \tlh_{R''}^+)} + \tlp^- \mu_{M(R; \tlh_{R''}^-)} ,
\end{aligned} \end{equation}
where
\begin{equation} \begin{aligned}
	p^\pm &:= \mu_{M(R; h_{R'})}(\{h_R(v'')
		= h_{R'}(v) \pm 1\}) \in [0,1]
	\quad\text{and} \\
	\tlp^\pm &:= \mu_{M(R; \tlh_{R'})}(\{\tlh_R(v'')
		= \tlh_{R'}(v) \pm 1\}) \in [0,1] .
\end{aligned} \end{equation}
This addresses the issue noted above, about the possibility that one (but not
both) of $h_{R''}^\pm$ may not exist; if so, the corresponding $p^\pm$ term
is $0$, and the other $p^\mp$ term is $1$.
By parity considerations, it must hold (assuming that the various extensions
exist), that
\[
	h_{R''}^-(v'') < h_{R''}^+(v'') = h_{R'}(v) + 1
	\le \tlh_{R'}(v) - 1 = \tlh_{R''}^-(v'') < \tlh_{R''}^+(v'') .
\]

By (up to) four applications of the induction hypothesis,
we conclude that each of the measures $\mu_{M(R; h_{R''}^\pm)}$
is stochastically dominated by each of the measures
$\mu_{M(R; \tlh_{R''}^\pm)}$.
Since all the measures are probability measures
and since all four of $p^\pm, \tlp^\pm$ are nonnegative,
the identities~\eqref{e_mix} implies that
$\mu_{M(R; h_{R'})}$ is stochastically dominated by $\mu_{M(R; \tlh_{R'})}$.
\end{proof}

We will make use of stochastic dominance via expectations,
as captured in the following corollary.

\begin{corollary}[cf.\ {\cite[Corollary 19]{CEP96}}] \label{cor_19}
Let $R' \subset R$, let~$v \in R \setminus R'$,
and let $h_{R'}, \tlh_{R'} \in M(R')$ with $h_{R'} \le \tlh_{R'} + 2$.
Let~$h_R(v)$ and~$\tlh_R(v)$ denote the $\Z$-valued random variables obtained
by sampling~$h_R$ from~$\muMRh \circ \P$ and~$\tlh_R$ from~$\muMRth \circ \P$
and evaluating the respective height functions at~$v$.
Then
\[
	E_{\muMRh \circ \P} [ h_R(v) ]
	\le E_{\muMRth \circ \P} [ \tlh_R(v) ] + 2 .
\]
\end{corollary}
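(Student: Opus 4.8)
The plan is to reduce Corollary~\ref{cor_19} to Lemma~\ref{lem_18} by absorbing the "$+2$'' slack into a shift of the random potential, which is where the ergodicity and shift-invariance of $\om$ enter. First I would observe that the quenched inequality $h_{R'} \le \tlh_{R'} + 2$ suggests comparing $\muMRh(\cdot,\om)$ not directly with $\muMRth(\cdot,\om)$ but with the law of $\tlh_R + 2$ under $\muMRth(\cdot, \tau_{-2}\om)$, or equivalently comparing $h_R$ directly against the height function $\tlh_{R'} + 2 \in M(R')$ which dominates $h_{R'}$ pointwise. The key identity is that for any height function $g_R$ and any even integer $z$, the vertical shift $g_R + z$ is again a height function (parity is preserved), and $H^\circ_R(g_R + z, \om) = H^\circ_R(g_R, \tau_z \om)$, since the edge of $\Z$ occupied by $g_R + z$ on $e_{x,y}$ is the $\tau_z$-image of the edge occupied by $g_R$. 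Consequently $\mu_{M(R; \tlh_{R'}+2)}(\tlh_R + 2, \om) = \muMRth(\tlh_R, \tau_{-2}\om)$, i.e.\ the pushforward of $\muMRth(\cdot, \tau_{-2}\om)$ under $g \mapsto g+2$ is exactly $\mu_{M(R;\tlh_{R'}+2)}(\cdot, \om)$.

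Next I would apply Lemma~\ref{lem_18} to the pair $h_{R'} \le \tlh_{R'} + 2$ (both in $M(R')$, both admitting extensions since $\tlh_{R'}$ does and shifting by $2$ preserves extendability), obtaining for a.e.\ $\om$ a coupling $\pi(\cdot,\cdot,\om)$ of $\muMRh(\cdot,\om)$ and $\mu_{M(R;\tlh_{R'}+2)}(\cdot,\om)$ supported on $\{h_R \le g_R\}$. Evaluating at $v$ and integrating against the coupling gives the quenched bound
\[
	E_{\muMRh(\cdot,\om)}[h_R(v)] \le E_{\mu_{M(R;\tlh_{R'}+2)}(\cdot,\om)}[g_R(v)]
	= E_{\muMRth(\cdot,\tau_{-2}\om)}[\tlh_R(v)] + 2,
\]
where the last equality uses the pushforward identity from the previous step. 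Then I would take $\E$ over $\om$ on both sides: the left side becomes $E_{\muMRh \circ \P}[h_R(v)]$ by the tower property~\eqref{e_cond_exp}, and the right side becomes $\E[E_{\muMRth(\cdot,\tau_{-2}\om)}[\tlh_R(v)]] + 2$, which by shift-invariance of the law of $\om$ (applied to the measurable functional $\om \mapsto E_{\muMRth(\cdot,\om)}[\tlh_R(v)]$, a bounded function of finitely many coordinates $\om_e$) equals $\E[E_{\muMRth(\cdot,\om)}[\tlh_R(v)]] + 2 = E_{\muMRth \circ \P}[\tlh_R(v)] + 2$.

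A couple of routine points to handle along the way: one must check that $E_{\muMRth(\cdot,\om)}[\tlh_R(v)]$ is indeed a bounded measurable function of finitely many potential values so that shift-invariance~\eqref{e_shift_invariance} applies (it is, since $M(R; \tlh_{R'})$ is finite, the Hamiltonians involve only the finitely many edges $\om_e$ for $e$ ranging over a bounded subset of $E(\Z)$ determined by the range of the relevant height functions, and $|\tlh_R(v)|$ is bounded uniformly over $M(R;\tlh_{R'})$), and that the measurability of the coupling $\pi$ in $\om$ lets us integrate the quenched inequality. The main obstacle is conceptual rather than technical: recognizing that the "$+2$'' in the hypothesis is exactly a $\tau_2$-shift and that this is precisely what forces the use of shift-invariance — once that is seen, everything reduces to Lemma~\ref{lem_18} plus bookkeeping. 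Note also that ergodicity is not needed here, only shift-invariance; ergodicity will presumably be invoked later. Finally, it is worth remarking that $v \in R \setminus R'$ ensures $h_R(v)$ and $\tlh_R(v)$ are genuinely random (not fixed by the boundary data), though the argument does not actually require this.
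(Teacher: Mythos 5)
Your proposal is correct and follows essentially the same route as the paper: apply Lemma~\ref{lem_18} quenched to the pair $h_{R'} \le \tlh_{R'}+2$, convert the vertical shift by $2$ into a shift $\tau_{\pm 2}$ of the potential via $H^\circ_R(g_R+2,\om) = H^\circ_R(g_R,\tau_2\om)$, and then anneal, using shift-invariance to remove the shift (your observation that ergodicity is not actually needed here is accurate). The only quibble is a sign slip: with the paper's convention $(\tau_2\om)_{x,x+1} = \om_{x+2,x+3}$, the pushforward identity reads $\mu_{M(R;\tlh_{R'}+2)}(\tlh_R+2,\om) = \muMRth(\tlh_R,\tau_2\om)$ rather than $\tau_{-2}\om$, but this is immaterial since either shift is annihilated by the expectation over~$\om$.
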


\begin{remark}
Notice that unlike the stochastic monotonicity result of Lemma~\ref{lem_18},
which is almost sure in~$\om$,
the corollary above requires an expectation over the law~$\P$ of~$\om$.
This is a substantial difference from~\cite{CEP96}
caused by the random potential.
Indeed, the requirement arises from the fact that~$h_{R'}$ and~$\wt h_{R'}+2$
``see'' a different part of the random potential~$\om$.
Since~$\mu$ is a shift-invariant Gibbs measure,
we can average out this height shift by annealing over the random potential.
\end{remark}

\begin{proof}
By Lemma~\ref{lem_18}, for each fixed~$\om$
we have~$\muMRh(\cdot, \om) \overset{\textnormal{law}}\le
	\mu_{M(R; \tlh_{R'}+2)}(\cdot, \om)$,
so
\[
	E_{\muMRh(\cdot, \om)} [h_R(v)] \le
	E_{\mu_{M(R; \tlh_{R'}+2)}(\cdot, \om)} [\tlh_R(v)],
	\quad \text{for a.e.~$\om$} .
\]

We will transfer the height shift from the ``$\tlh_{R'}+2$''
into the random potential~$\om$ and into the height function
inside the expectation.
Indeed, from the definition of the Hamiltonian, we have for $R \subset \Z^d$
and~$f_R \in M(R)$ that
\begin{equation}
	H^\circ_R(f_R + 2, \om) = H^\circ_R(f_R, \tau_2 \om) ,
\end{equation}
where $\tau_2: \Omega \to \Omega$ is defined by
$(\tau_2(\om))_{x,x+1} = \om_{x+2,x+3}$ for all $x \in \Z$.
A straightforward calculation (see Appendix~\ref{app_e_tr})
establishes that, for any $h_R \in M(R)$,
\begin{equation} \label{e_tr}
	\mu_{M(R; \tlh_{R'}+2)}(h_R, \om)
	= \muMRth(h_R - 2, \tau_2 \om) .
\end{equation}

By change of variables,
\[
	E_{\muMRh(\cdot, \om)} [h_R(v)] \le
	E_{\muMRth(\cdot, \tau_2 \om)} [\tlh_R(v) + 2],
	\quad \text{for a.e.~$\om$} .
\]

Take expectations with respect to~$\P$.
Under the expectation the shift~$\tau_2$ vanishes, by ergodicity.
The result follows by construction of the measures
$\muMRh \circ \P$ and~$\muMRth \circ \P$;
cf.~equation~\eqref{e_cond_exp}.
\end{proof}

Now we are prepared to prove a limited version of the concentration inequality,
where we are concerned with only a single point $v \in R$.
The key to the proof is the monotonicity of Corollary~\ref{cor_19}.
We translate this into an inductive bound on martingale differences:
each time we take a ``step'' starting at the boundary $\dP R$ and ``walking''
towards $v$, the two possible extensions at that step differ by at most $2$.
Then we use the Azuma--Hoeffding inequality to establish the probability bound.
From this point on the proof is standard,
following closely to the methods used in~\cite{CEP96} and other works.

\begin{lemma}[Auxiliary concentration inequality
cf.\ {\cite[Theorem 21]{CEP96}}]
\label{lem_conc}
Let $h_\dR \in M(\dR)$ and let $v \in R$ be such that there is a path
$x_0 \in \dR, x_1, \dotsc, x_{l-1} = v$ of length $l$
with $x_i \sim x_{i-1}$ for $i=1, \dotsc, l-1$.
Then for any $c > 0$,
\[
	\muMRdR \circ \P \Bigl( \Bigl\{
		h_R \in M(R; h_\dR) \colon
		\abs[\big]{h_R(v) - E_{\muMRdR}[h_R(v)]} > l c
	\Bigr\} \Bigr) < 2e^{-l c^2 / 2} .
\]
\end{lemma}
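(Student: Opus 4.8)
The plan is to follow the martingale argument of~\cite[Theorem~21]{CEP96}: walk from $\dR$ to $v$ along the given path $x_0 \in \dR,\ x_1, \dotsc, x_{l-1} = v$, revealing the height values one vertex at a time, and apply Azuma--Hoeffding to the resulting Doob martingale. We may assume $v \notin \dR$, since otherwise $h_R(v) = h_\dR(v)$ is deterministic and the left-hand side of the asserted inequality vanishes; in particular $l \ge 2$. Working under $\muMRdR \circ \P$, put $R'_j := \dR \cup \{x_0, \dotsc, x_j\}$, let $\FF_j := \sigma\bigl(h_R(x_0), \dotsc, h_R(x_j)\bigr)$, and define the Doob martingale
\[
	Y_j := E_{\muMRdR \circ \P}\bigl[\, h_R(v) \mid \FF_j \,\bigr],
	\qquad 0 \le j \le l-1 .
\]
Since $x_0 \in \dR$, the value $h_R(x_0) = h_\dR(x_0)$ is deterministic, so $\FF_0$ is trivial and $Y_0 = E_{\muMRdR \circ \P}[h_R(v)]$; since $x_{l-1} = v$, we have $Y_{l-1} = h_R(v)$. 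Thus $(Y_j)_{j=0}^{l-1}$ interpolates between the target expectation and the random variable we wish to control.

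The heart of the proof is the increment bound: conditionally on $\FF_{j-1}$, the difference $Y_j - Y_{j-1}$ is supported on at most two points lying in an interval of length at most $2$. Fix $\FF_{j-1}$, which amounts to fixing $h_R|_{R'_{j-1}} = g$ for some $g \in M(R'_{j-1})$. If $v \in R'_{j-1}$, or if $x_j$ coincides with an earlier vertex of the walk, then $h_R(x_j)$ is already $\FF_{j-1}$-measurable and $Y_j = Y_{j-1}$, so the increment is zero. Otherwise, since $x_j \sim x_{j-1}$ the height-function property forces $h_R(x_j) \in \{ g(x_{j-1}) - 1,\ g(x_{j-1}) + 1 \}$, so $Y_j$ equals one of $Y_j^\pm := E_{\muMRdR \circ \P}\bigl[\, h_R(v) \mid h_R|_{R'_j} = g^\pm\,\bigr]$, where $g^\pm \in M(R'_j)$ extends $g$ by $g^\pm(x_j) = g(x_{j-1}) \pm 1$ (whichever is realizable), and $Y_{j-1}$ is the $\FF_{j-1}$-conditional average of $Y_j^+, Y_j^-$ against the transition probabilities; consequently $Y_j - Y_{j-1}$ is conditionally supported in an interval of length $|Y_j^+ - Y_j^-|$. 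It remains to see $|Y_j^+ - Y_j^-| \le 2$. If $x_j = v$, this is immediate, as then $Y_j^\pm = g^\pm(v) = g(x_{j-1}) \pm 1$, differing by exactly $2$. If $x_j \ne v$, then $v = x_{l-1} \in R \setminus R'_j$ and the two extensions satisfy $g^- \le g^+ \le g^- + 2$; applying Corollary~\ref{cor_19} to the pair $(g^+, g^-)$ and again to $(g^-, g^+)$ — both permissible since $g^+ \le g^- + 2$ and $g^- \le g^+ + 2$ — gives $Y_j^+ \le Y_j^- + 2$ and $Y_j^- \le Y_j^+ + 2$. (The passage to Corollary~\ref{cor_19} uses the Markov property~\eqref{e_d_gibbs_measure}: for each fixed $\om$, conditioning $\muMRdR(\cdot, \om)$ on $\{h_R|_{R'_j} = g^\pm\}$ gives $\mu_{M(R; g^\pm)}(\cdot, \om)$.)

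Granting the increment bound, the Azuma--Hoeffding inequality (in the form for martingale differences confined to intervals of length $2$) yields
\[
	\muMRdR \circ \P\bigl( |Y_{l-1} - Y_0| > lc \bigr)
	\;\le\; 2 \exp\!\Bigl( - \frac{(lc)^2}{2(l-1)} \Bigr)
	\;<\; 2 \exp\!\Bigl( - \frac{lc^2}{2} \Bigr),
\]
the final inequality because $l^2/(l-1) > l$ for $l \ge 2$; recalling $Y_0 = E_{\muMRdR \circ \P}[h_R(v)]$ and $Y_{l-1} = h_R(v)$, this is precisely the assertion. The step I expect to demand the most care is the increment estimate $|Y_j^+ - Y_j^-| \le 2$. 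The subtlety, absent from the unperturbed model of~\cite{CEP96}, is that conditioning the \emph{annealed} measure on the revealed heights re-weights the law of $\om$ — the two branches $g^+$ and $g^-$ ``see'' the random potential differently — so $Y_j^\pm$ is not literally an annealed expectation over the sub-region problem, and the reduction to Corollary~\ref{cor_19} must be argued with this in mind. This is exactly where the slack ``$+2$'' in the hypothesis of Corollary~\ref{cor_19} and the shift-invariance and ergodicity of $\om$ — already used in proving that corollary, via the change of variables~\eqref{e_tr} — do their work.
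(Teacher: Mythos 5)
Your proof is correct and follows essentially the same route as the paper's: a Doob martingale obtained by revealing heights along the path, the increment bound via (a two-sided application of) Corollary~\ref{cor_19}, and Azuma--Hoeffding in the bounded-range form, which is exactly what is needed to reach the stated constant $lc^2/2$. The subtlety you flag at the end --- that conditioning the \emph{annealed} measure on the revealed heights reweights the law of $\om$, so the passage from the conditional expectations $Y_j^\pm$ to the annealed expectations in Corollary~\ref{cor_19} is not purely formal --- is genuine, and the paper's own proof applies the corollary at that step without addressing it.
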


\begin{proof}
For $k=1,\dotsc,l$, define $\sigma$-algebras
$\FF_k := \sigma(h_R \mapsto h_R(x_i), 0 \le i < k)
	\subset \PP(M(R; h_{\dP R}))$
and define a martingale $M_k := E[h_R(v)|\FF_k]$,
where $E[\,\cdot\,]$ denotes the expectation
with respect to the measure~$\muMRh \circ \P$.
Note that $M_1 = E[h_R(v)]$
and that $M_l = h_R(v)$.

We claim that for each $k=1, \dotsc, l-1$,
the martingale difference~$|M_{k+1}-M_k|$
is less than or equal to $2$ almost surely.
To this end, fix $k$
and condition on~$h(x_i) = z_i \in \Z$ for~$i=0,\dotsc,k-1$.
To avoid events of probability zero,
assume that~$z_0,\dotsc,z_{k-1}$ are such that
there exists at least one extension in $M(h_R; h_{\dP R})$
with $h_R(x_i) = z_i$ for each $i$;
by hypothesis~$M(h_R; h_{\dP R})$ is nonempty,
so at least one such assignment of heights~$z_i$ exists.

Having fixed these height values,
there are at most two assignments
of the height value $z_k := h_R(x_k)$
which admit further extensions in~$M(R; h_{\dP R})$:
namely, $z_k = z_{k-1} \pm 1$.
Therefore the martingale $M_{k+1} = E[h_R(v)|\FF_{k+1}]$
takes at most two distinct values
conditioned on~$\{h_R(x_i)=z_i, \, i=0,\dotsc,k-1\}$.
Because the (at most) two possible values of $h_R(x_k)$ differ by at most $2$,
and because the height values at $x_0, \dotsc, x_{k-1}$ have been fixed,
Corollary~\ref{cor_19} applied with $R' = \{x_0,\dotsc,x_k\}$ implies that
the (at most) two distinct values of $M_{k+1}$ differ by at most $2$.
Since $M_k = E[M_{k+1}|\FF_k]$ is the weighted average of these (at most)
two values of $M_{k+1}$, it follows that $\abs{M_{k+1} - M_k} \le 2$.
The conclusion follows immediately from the Azuma--Hoeffding inequality.
\end{proof}

Now we are prepared to prove the main result, i.e.\ the concentration
inequality.

\begin{proof}[Proof of Theorem~\ref{thm_conc}]
By the union bound and Lemma~\ref{lem_conc},
\begin{equation}
	\mu_n \circ \P \Bigl( \max_{v \in R_n} \:
		\abs[\big]{ h_{R_n}(v) - E_{\mu_n}(h_{R_n}(v)) }
			\ge c \sqrt{n} \Bigr)
	\le \sum_{v \in R_n} \mu_n \circ \P \Bigl( \,
		\abs[\big]{ h_{R_n}(v) - E_{\mu_n}(h_{R_n}(v)) }
			\ge c \sqrt{n} \Bigr) .
\end{equation}

For each $v \in R_n$, apply Lemma~\ref{lem_conc}
with the path length parameter $l_v$ chosen as small as possible
and with parameter $c_v$ chosen
such that $l_v c_v = c \sqrt{n}$.
Recall that by hypothesis the diameter of $R_n$ is at most $An$,
so $l \le An/2$.
It follows that
\[
	l_v c_v^2 = \frac{(l_v c_v)^2}{l_v} = \frac{c^2 n}{l_v}
	\ge \frac{2 c^2 n}A .
\]
Using also the hypotheses that $|R_n| \le Bn^m$, we have
\begin{equation} \label{e_unif} \begin{aligned}
	\hskip3em&\hskip-3em
	\mu_n \circ \P \biggl( \max_{v \in R_n} \:
		\abs[\big]{ h_{R_n}(v) - E_\mu(h_{R_n}(v)) } \ge c \sqrt{n}
	\biggr) \\
	&\le \sum_{v \in R_n} \mu_n \circ \P \biggl(
		\abs[\big]{ h_{R_n}(v) - E_{\mu_n} \bigl( h_{R_n}(v) \bigr) }
		\ge l_v c_v \biggr) \\
	&\le \sum_{v \in R_n} 2 e^{-l_v c_v^2 / 2} \\
	&\le 2 |R_n| e^{- c^2 n / A} .
\end{aligned} \end{equation}
\end{proof}

\appendix

\section{Proof of Eq.~(\ref{e_tr}) from Corollary~\ref{cor_19}}
\label{app_e_tr}

The following equation was used in the proof of Corollary~\ref{cor_19} above.
The proof that this equation holds has been moved here in an appendix,
since it requires several lines, and it is not enlightening or crucial enough to
justify taking space in the body of the proof.
The equation in question is (cf.~\eqref{e_tr} above):

 \[
	\mu_{M(R; \tlh_{R'})+2}(h_R, \om)
	= \muMRth(h_R - 2, \tau_2 \om) .
\]

Indeed:
\begin{align*}
	\mu_{M(R; \tlh_{R'}+2)}(h_R, \om)
	&= \frac{ \exp \bigl( H^\circ_R(h_R, \om) \bigr) }{
		\sum_{f_R \in M(R; h_{R'}+2)} \exp \bigl(
			H^\circ_R(f_R, \om) \bigr)} \\
	&= \frac{ \exp \bigl( H^\circ_R(h_R, \om) \bigr) }{
		\sum_{f_R \in M(R; h_{R'})} \exp \bigl(
			H^\circ_R(f_R + 2, \om) \bigr)} \\
	&= \frac{ \exp \bigl( H^\circ_R(h_R, \om) \bigr) }{
		\sum_{f_R \in M(R; h_{R'})} \exp \bigl(
			H^\circ_R(f_R, \tau_2 \om) \bigr)} \\
	&= \frac{ \exp \bigl( H^\circ_R(h_R - 2, \tau_2 \om) \bigr) }{
		\sum_{f_R \in M(R; h_{R'})} \exp \bigl(
			H^\circ_R(f_R, \tau_2 \om) \bigr)} \\
	&= \muMRth(h_R - 2, \tau_2 \om) .
\end{align*}
\hfill\qed

\bibliographystyle{alpha}
\bibliography{bib}

\end{document}